\title{New bounds on the signed domination numbers of graphs}
\author {
S.M. Hosseini Moghaddam\\
Islamic Azad University\\
Behshahr Branch, Behshahr, IRI\\
{\tt sm.hosseini1980@yahoo.com}\vspace{3mm}\\
Abdollah Khodkar\thanks{corresponding author} \\
Department of Mathematics\\
University of West Georgia\\
Carrollton, GA 30118, USA\\
{\tt akhodkar@westga.edu}\vspace{3mm}\\
Babak Samadi\\
Department of Mathematics\\
Arak University\\
Arak, IRI\\
{\tt samadibabak62@gmail.com}\vspace{3mm}
}
\date{}
 \newtheorem{theorem}{Theorem}[section]
\newtheorem{corollary}[theorem]{Corollary}
\newtheorem{lemma}[theorem]{Lemma}
\theoremstyle{definition}
\begin{document}

\maketitle

\begin{abstract}
\noindent A signed dominating function of a graph $G$ with vertex set $V$
is a function $f:V\rightarrow \{-1,1\}$
such that for every vertex $v$ in $V$ the sum of the values of $f$ at $v$ and at every vertex
$u$ adjacent to $v$ is at least 1. The weight of $f$ is the sum of the values of $f$
at every vertex of $V$. The signed domination number of
$G$ is the minimum weight of a signed dominating function of $G$.
In this paper, we study the signed domination numbers of graphs and present new sharp
lower and upper bounds for this parameter. As an example, we prove that the
signed domination number of a tree of order $n$ with $\ell$ leaves and $s$ support vertices is at least
‎$\dfrac{n+4+2(\ell-s)}{3}‎‎$. \vspace{3mm}\\
{\bf Keywords:} dominating set, signed dominating set, signed domination number
\end{abstract}

\section{Introduction}
Throughout this paper, let $G$ be a finite connected graph with vertex set $V=V(G)$,
edge set $E=E(G)$, minimum degree $\delta=\delta(G)$ and maximum degree $\Delta=\Delta(G)$.
 We use \cite{we} for terminology and notation which are not defined here. For any vertex
 $v\in V$, $N(v)=\{u\in G\mid uv\in E(G)\}$ denotes the {\em open neighborhood} of $v$ in
 $G$, and $N[v]=N(v)\cup \{v\}$ denotes its {\em closed neighborhood}. A set  $S\subseteq V$
 is a {\em dominating set} in $G$ if each vertex in $V\setminus S$ is adjacent to at least one
 vertex in $S$. The {\em domination number $\gamma(G)$} is the minimum cardinality of a
 dominating set in $G$. A subset $B\subseteq V(G)$ is a {\em packing} in $G$ if for
 every distinct vertices $u,v‎\in B‎$, $N[u]‎‎\cap N[v]‎=‎\emptyset‎$. The {\em packing
 number} $\rho(G)$ is the maximum cardinality of a packing in $G$.\\
 In \cite{hh}, Harary and Haynes introduced the concept of tuple domination as a
 generalization of domination in graphs. Let $1\leq k\leq \delta(G)+1$.
 A set $D\subseteq V$ is a {\em $k$-tuple dominating set} in $G$ if $|N[v]\cap D|\geq k$,
 for all $v\in V(G)$. The {\em $k$-tuple domination number}, denoted by $\gamma_{\times k}(G)$,
 is the minimum cardinality of a $k$-tuple dominating set. In fact, the authors showed
 that every graph $G$ with $\delta\geq k-1$ has a $k$-tuple dominating set
 and hence a $k$-tuple domination number. It is easy to see that
 $\gamma_{\times1}(G)=‎\gamma(G)‎$. This concept has been studied by several
 authors including \cite{gghr,msh}.

 \noindent Gallant et al. \cite{gghr} introduced the concept of limited packing
 in graphs and exhibited some real-world applications of it to network security,
 market saturation and codes. A set of vertices $B\subseteq V$ is called a
 {\em $k$-limited packing set} in $G$ if
$|N[v]\cap B|\leq k$ for all $v\in V$, where $k\geq 1$. The {\em $k$-limited
packing number}, $L_{k}(G)$, is the largest number of vertices in a $k$-limited
packing set. When $k=1$ we have $L‎_{1}(G)‎=‎\rho(G)‎$.

\noindent Let $S\subseteq V$. For a real-valued function $f:V\rightarrow R$ we define
$f(S)=\sum_{v\in S }f(v)$. Also, $f(V)$ is the {\em weight} of $f$.
A {\em signed dominating function}, abbreviated SDF, of $G$ is defined in
\cite{dhhs} as a function $f:V\rightarrow \{-1,1\}$ such that $f(N[v])\geq1$,
for every $v \in V$. The {\em signed domination number}, abbreviated  SDN, of
 $G$ is $\gamma_{s}(G)= \min\{ f(V) \mid f  \mbox { is  a SDF  of } G \}$.
 This concept was defined in \cite{dhhs} and has been studied by several
 authors including \cite{cs,f,hw,hs}.

\noindent In this paper, we continue the study of the concept of the signed domination numbers of
graphs. The authors noted that most of the existing bounds on $\gamma_{s}(G)$ are lower bounds except those
that are related to regular graphs; for more information the reader can consult \cite{f}.
In Section 2, we prove that
$\gamma_{s}(G)\leq  n-2‎\lfloor‎\dfrac{2‎\rho(G)+‎\delta(G)-2‎‎}{2}‎‎\rfloor‎‎$,
for a graph $G$ of order $n$ with $‎\delta(G)‎‎\geq2‎$.
In Section 3, we find some new sharp lower bounds
on $\gamma_s(G)$ for a general graph $G$. The lower bound given in Part (i) of
Theorem \ref{LB2}
can also be found in \cite{hw} with a much longer proof than the one presented here.
We also prove that $\gamma_{s}(T)‎‎\geq ‎\dfrac{n+4+2(\ell-s)}{3}‎‎$,
for a tree of order $n$ with $\ell$ leaves and $s$ support vertices.
Furthermore we show that this bound is sharp.

\section{An upper bound}
We bound $\gamma_{s}(G)$ from above in terms of order, minimum degree
and packing number of $G$ using the concept of limited packing.

\begin{theorem} \label{UB1}
Let $G$ be a graph of order $n$ with $\delta\geq2$. Then
$$\gamma_{s}(G)\leq  n-2‎\lfloor‎\dfrac{2‎\rho(G)+‎\delta-2‎‎}{2}‎‎\rfloor‎‎$$
and this bound is sharp.
\end{theorem}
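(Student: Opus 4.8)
The plan is to produce an explicit signed dominating function by setting $f=-1$ on a cleverly chosen limited packing and $f=+1$ everywhere else, and then to bound the size of that packing below using $\rho$ and $\delta$. First I would record the governing observation: if $M\subseteq V$ satisfies $|N[v]\cap M|\leq\lfloor\delta/2\rfloor$ for every $v\in V$ (that is, $M$ is a $\lfloor\delta/2\rfloor$-limited packing), then the function $f$ given by $f(x)=-1$ for $x\in M$ and $f(x)=1$ otherwise is an SDF. Indeed, for each $v$ one computes $f(N[v])=|N[v]|-2|N[v]\cap M|=\deg(v)+1-2|N[v]\cap M|\geq\delta+1-2\lfloor\delta/2\rfloor\geq1$, using $\deg(v)\geq\delta$ together with $2\lfloor\delta/2\rfloor\leq\delta$. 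Since $f(V)=n-2|M|$, the theorem reduces to exhibiting such an $M$ with $|M|\geq\rho+\lfloor\delta/2\rfloor-1$, because a short computation gives $\lfloor\frac{2\rho+\delta-2}{2}\rfloor=\rho-1+\lfloor\delta/2\rfloor$.

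Next I would construct $M$ by enlarging a maximum packing. Let $B=\{v_1,\dots,v_\rho\}$ be a packing with $|B|=\rho$ (note $\rho\geq1$), and put $M=B\cup\{u_1,\dots,u_{\lfloor\delta/2\rfloor-1}\}$, where the $u_j$ are distinct neighbors of $v_1$. These neighbors exist since $\deg(v_1)\geq\delta\geq\lfloor\delta/2\rfloor-1$, and each $u_j$ is distinct from every $v_i$: we have $u_j\in N[v_1]$, and the disjointness of the closed neighborhoods of the packing vertices forces $u_j\notin N[v_i]\ni v_i$ for $i\neq1$, while $u_j\neq v_1$ as it is a neighbor. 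Thus $|M|=\rho+\lfloor\delta/2\rfloor-1$. The crucial point is to verify that $M$ is a $\lfloor\delta/2\rfloor$-limited packing: for any $w\in V$, the packing property of $B$ gives $|N[w]\cap B|\leq1$, while trivially $|N[w]\cap\{u_1,\dots,u_{\lfloor\delta/2\rfloor-1}\}|\leq\lfloor\delta/2\rfloor-1$, so summing yields $|N[w]\cap M|\leq\lfloor\delta/2\rfloor$. Plugging into the observation gives $\gamma_s(G)\leq n-2|M|=n-2(\rho-1+\lfloor\delta/2\rfloor)$, which is exactly the claimed bound.

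The step most in need of care is this last count $|N[w]\cap M|$: one might fear that a vertex $w$ close to $v_1$ could simultaneously see several of the added neighbors $u_j$ \emph{and} a packing vertex and thereby exceed the limit. The resolution is the clean decomposition $M=B\cup\{u_1,\dots,u_{\lfloor\delta/2\rfloor-1}\}$, which bounds the two contributions separately by $1$ and $\lfloor\delta/2\rfloor-1$, so no overflow is possible regardless of the local structure near $v_1$. Finally, for sharpness I would present the cycles $C_n$, for which $\delta=2$ and $\rho(C_n)=\lfloor n/3\rfloor$, so the bound reads $n-2\lfloor n/3\rfloor$; the periodic pattern that repeats $(+,+,-)$ around the cycle is an SDF of this weight, establishing $\gamma_s(C_n)=n-2\lfloor n/3\rfloor$ and hence equality.
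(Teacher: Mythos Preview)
Your proof is correct and follows the paper's strategy: assign $-1$ on a $\lfloor\delta/2\rfloor$-limited packing to obtain an SDF, and then show such a packing of size at least $\rho+\lfloor\delta/2\rfloor-1$ exists. The only minor differences are that you build this packing explicitly (a maximum packing together with $\lfloor\delta/2\rfloor-1$ neighbors of one of its vertices) whereas the paper iterates the generic inequality $L_{k+1}(G)\geq L_k(G)+1$ down from $k=\lfloor\delta/2\rfloor$ to $L_1=\rho$, and you certify sharpness with cycles $C_n$ rather than the paper's $K_n$.
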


\begin{proof}Let $B$ be a $‎\lfloor‎‎\frac{‎\delta‎}{2}‎\rfloor‎‎$-limited
packing in $G$. Define $f:V\rightarrow \{-1,1\}$ by
$$ f(v)=\left \{
\begin{array}{lll}
-1 & \mbox{if} & v\in B \\
 1 & \mbox{if} & v\in V‎\setminus‎ B.
\end{array}
\right.$$
 Since $B$ is a $\lfloor\frac{\delta}{2}\rfloor$-limited packing in $G$,
 $|N[v]\cap (V‎\setminus‎ B)|\geq \deg(v)-\lfloor\frac{\delta}{2} \rfloor+1$.
 Therefore, for every vertex $v$ in $V$,
 $$f(N[v])=|N[v]\cap (V‎\setminus‎ B)|-|N[v]\cap B|\geq \deg(v)-
 \lfloor\frac{ \delta}{2} \rfloor+1-\lfloor\frac{\delta}{2}\rfloor\geq1.$$
 Therefore $f$ is a SDF of
 $G$ with weight $n-2|B|.$ So, by the definition of
 ${\lfloor\frac{\delta}{2}\rfloor}$-limited packing number,
\begin{equation}\label{EQ1}
\gamma_{s}(G)‎\leq n-2L_{\lfloor\frac{\delta}{2}\rfloor}(G).
\end{equation}
\noindent  We now claim that $B\neq V$. If $B=V$ and $u\in V$ such that $\deg(u)=\Delta$,
then $\Delta+1=|N[u]\cap B|\leq \lfloor\frac{\delta}{2}\rfloor$, a contradiction.

Now let $u\in V\setminus B$. It is easy to check that
$|N[v]\cap(B \cup \{u\})|\leq\lfloor\frac{\delta}{2}\rfloor+1$, for all $v\in V$.
Therefore $B\cup \{u\}$ is a $\lfloor \frac{\delta}{2}\rfloor+1$-limited packing
set in $G$. Hence,
$L_{\lfloor\frac{\delta}{2}\rfloor+1}(G) \geq |B\cup \{u\}|= L_{\lfloor \frac{\delta}{2}\rfloor}(G)+1$.
Repeating these inequalities, we obtain
$L_{\lfloor\frac{\delta}{2}\rfloor}(G)\geq L_{\lfloor\frac{\delta}{2}\rfloor-1}(G)+1\geq
\ldots \geq L_{1}(G)+\lfloor\frac{\delta}{2}\rfloor-1$, and since $L_{1}(G)=\rho(G)$, we conclude
\begin{equation}\label{EQ2}
L_{\lfloor\frac{\delta}{2}\rfloor}(G)\geq\rho(G)+\lfloor\frac{\delta}{2}\rfloor-1
\end{equation}
The upper bound now follows by Inequalities (\ref{EQ1}) and (\ref{EQ2}). Moreover, The bound is
sharp for the complete graph of order $n\geq 3$.
\end{proof}

\section{Lower bounds}
For convenience, for the rest of the paper we make use of the following notations.
Let $G$ be a graph and $f:V(G)\longrightarrow\{-1,1\}$ be a SDF of $G$.
Define $V^{+}=\{v\in V \mid f(v)=1 \}$ and $V^{-}=\{v\in V \mid f(v)=-1 \}$. Let $G^{+}=G[V^{+}]$
and $G^{-}=G[V^{-}]$ be the subgraphs of $G$ induced by $V^{+}$ and $V^{-}$, respectively.
We also let $E^{+}=|E(G^{+})|$ and $E^{-}=|E(G^{-})|$.
We consider $[V^{+},V^{-}]$ as the set of edges having one end point in
$V^{+}$ and the other in $V^{-}$, $V_{o}=\{v‎\in V‎ \mid \deg(v) \mbox{ is  odd} \}$
 and $V_{e}=\{v‎\in V ‎ \mid \deg(v)  \mbox{ is  even} \}$. Also
 $V_{o}^{+}=V_{o}\cap V^{+}$, $V_{o}^{-}=V_{o}\cap V^{-}$, $V_{e}^{+}=V_{e}\cap V^{+}$
 and $V_{e}^{-}=V_{e}\cap V^{-}$. Finally, $\deg_{G^{+}}(v)=|N(v)\cap V^{+}|$
 and $\deg_{G^{-}}(v)=|N(v)\cap V^{-}|$.
For a graph $G$, let $O=\{ v \in V \mid \deg(v)=0 \}$, $L=\{ v\in V \mid \deg(v)=1 \}$,
$S=\{v\in V \mid N(v)\cap V_{2} \neq \emptyset \}$, $C(G)=V\setminus(O\cup L\cup S )$
and $\delta^{*}=\min\{\deg(v) \mid v\in C(G) \}$. Obviously, if $C(G)= \emptyset$,
then $\gamma_{s}(G)=n$. Therefore, in the following discussions we assume,
without loss of generality, that $C(G)\neq \emptyset$.
Thus, $\delta^{*}\geq \max\{2,\delta\}$.

\begin{lemma}\label{Lem.3.1}
The following statements hold.
\begin{itemize}
\item[(i)] $(\lceil \dfrac{\delta^{*}}{2}\rceil+1)|V^{-}|
\leq |[V^{+},V^{-}]|\leq \lfloor\dfrac{\Delta}{2}\rfloor(|V^{+}\setminus L|)$,

\item[(ii)] $|V_{o}|+2|V^{-}|\leq2|E^{+}|-2|E^{-}|$.
\end{itemize}
\end{lemma}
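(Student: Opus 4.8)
The plan is to derive, for each vertex $v$, a local inequality from the defining condition $f(N[v])\ge 1$ and then sum it in two complementary ways. Since $|N[v]\cap V^{+}|+|N[v]\cap V^{-}|=\deg(v)+1$ and $f(N[v])=|N[v]\cap V^{+}|-|N[v]\cap V^{-}|\ge 1$, I first record the two inequalities $|N[v]\cap V^{+}|\ge\lceil\deg(v)/2\rceil+1$ and $|N[v]\cap V^{-}|\le\lfloor\deg(v)/2\rfloor$, valid for every $v$. The second preliminary step, which makes the minimum degree $\delta^{*}$ relevant, is to observe that $V^{-}$ is disjoint from $O$, $L$ and $S$: an isolated vertex, a leaf, and a support vertex are each forced to carry value $1$ (for a leaf $u$ with neighbor $w$ the constraint $f(u)+f(w)\ge 1$ forces $f(u)=f(w)=1$). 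Hence $V^{-}\subseteq C(G)$, so $\deg(v)\ge\delta^{*}$ for every $v\in V^{-}$, and moreover every leaf in $V^{+}$ has its unique neighbor in $V^{+}$.

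For part (i) I would double count the edge set $[V^{+},V^{-}]$. Counting from the $V^{-}$ side, each $v\in V^{-}$ contributes $|N(v)\cap V^{+}|=|N[v]\cap V^{+}|\ge\lceil\delta^{*}/2\rceil+1$ edges, and summing over $V^{-}$ gives the lower bound $(\lceil\delta^{*}/2\rceil+1)|V^{-}|$. Counting from the $V^{+}$ side, each $v\in V^{+}$ contributes $|N(v)\cap V^{-}|=|N[v]\cap V^{-}|\le\lfloor\deg(v)/2\rfloor\le\lfloor\Delta/2\rfloor$; because each leaf in $V^{+}$ has its neighbor in $V^{+}$ and so contributes $0$, the sum runs effectively over $V^{+}\setminus L$, yielding the upper bound $\lfloor\Delta/2\rfloor\,|V^{+}\setminus L|$.

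For part (ii) I would sum the excess $D(v)=\deg_{G^{+}}(v)-\deg_{G^{-}}(v)$ over all vertices. A per-edge count gives $\sum_{v}\deg_{G^{+}}(v)=2E^{+}+|[V^{+},V^{-}]|$ and $\sum_{v}\deg_{G^{-}}(v)=2E^{-}+|[V^{+},V^{-}]|$, so $\sum_{v}D(v)=2E^{+}-2E^{-}$. On the other hand the local condition yields $D(v)\ge 0$ for $v\in V^{+}$ and $D(v)\ge 2$ for $v\in V^{-}$, and the parity identity $D(v)\equiv\deg(v)\pmod 2$ sharpens these to $D(v)\ge 1$ on $V_{o}^{+}$ and $D(v)\ge 3$ on $V_{o}^{-}$. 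Summing the four cases gives $\sum_{v}D(v)\ge|V_{o}^{+}|+2|V_{e}^{-}|+3|V_{o}^{-}|$, and this last expression is exactly $|V_{o}|+2|V^{-}|$, which proves the inequality.

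The difficulty here is bookkeeping rather than depth. In (i) the lower bound collapses unless every negative vertex really has degree at least $\delta^{*}$, which is precisely why excluding $O$, $L$ and $S$ from $V^{-}$ is indispensable, and the upper bound is only as clean as claimed after the leaves of $V^{+}$ are discarded. In (ii) the single subtle point is the parity upgrade from $D(v)\ge 2$ to $D(v)\ge 3$ on odd-degree negative vertices; without it the two sides would match only up to a slack term, and with it they coincide exactly.
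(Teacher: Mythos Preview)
Your proof is correct and follows essentially the same approach as the paper: part (i) is identical, and for part (ii) both arguments rest on the same per-vertex bounds $\deg_{G^{+}}(v)-\deg_{G^{-}}(v)\ge 0,1,2,3$ according to sign and parity. The only cosmetic difference is that you sum $D(v)$ globally via the identity $\sum_v D(v)=2E^{+}-2E^{-}$, whereas the paper routes the same inequalities through $|[V^{+},V^{-}]|$ as an intermediate quantity, bounding it below from the $V^{-}$ side and above from the $V^{+}$ side before combining.
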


\begin{proof}
(i) \quad Let $v \in V^{-}$. Since $f(N[v]) \geq1$ and $v \in C(G)$,
we have $\deg_{G^{+}}(v)\geq \lceil\dfrac{\deg(v)}{2}\rceil+1\geq \lceil\dfrac{\delta^{*}}{2}\rceil+1$.
Therefore, $|[V^{+}, V^{-}]| \geq (\lceil\dfrac{\delta ^{*}}{2}\rceil+1)|V^{-}|$.
On the other hand, all leaves and support vertices belong to $V‎^{+}‎$.
Now let $v‎\in‎V‎^{+}‎\setminus L$.
Then $\deg_{G^{-}}(v)\leq\lfloor\dfrac{\deg(v)}{2}\rfloor\leq \lfloor\dfrac{\Delta}{2}\rfloor$.
Therefore, $|[V^{+},V^{-}]|\leq \lfloor\dfrac{ \Delta}{2}\rfloor(| V^{+}\setminus L|)$.\vspace{3mm}

\noindent (ii) \quad We first derive a lower bound for $|[V^{+},V^{-}]| $. Let $v\in V^{-}$.
Since $f(N[v])‎\geq1‎$, we observe that $\deg_{G^{+}}(v)‎\geq \deg_{G^{-}}(v)+2‎$ and
$\deg_{G^{+}}(v)‎\geq \deg_{G^{-}}(v)+3$ when $\deg(v)$ is odd. This leads to
\begin{equation}\label{EQ5}
\begin{array}{lcl}
|[V^{+},V^{-}]|&=&\sum_{v\in V_o^-}\deg_{G^+}(v)+\sum_{v\in V_e^-}\deg_{G^+}(v)\\
&‎\geq‎ &\sum_{v\in V_o^-}(\deg_{G^-}(v)+3)+\sum_{v\in V_e^-}(\deg_{G^-}(v)+2)\\
&=&3|V_o^-|+2|V_e^-|+\sum_{v\in V^-}\deg_{G^-}(v)\\
&=&2|V^-|+2|E^-|+|V_o^-|.
\end{array}
\end{equation}
Now let $v \in V^{+}$. Since $f(N[v])‎\geq1‎$,
we observe that $\deg_{G^{+}}(v)‎\geq \deg_{G^{-}}(v)‎$
and $\deg_{G^{+}}(v)‎\geq \deg_{G^{-}}(v)‎+1$ when $\deg(v)$ is odd.
It follows that
\begin{equation}\label{EQ6}
\begin{array}{lcl}
|[V^{+},V^{-}]|&=&\sum_{v\in V_o^+}\deg_{G^-}(v)+\sum_{v\in V_e^+}\deg_{G^-}(v)\\
&‎\leq ‎&\sum_{v\in V_o^+}(\deg_{G^+}(v)-1)+\sum_{v\in V_e^+}(\deg_{G^+}(v))\\
&=&\sum_{v\in V^+}\deg_{G^+}(v)-|V_o^+|=2|E^+|-|V_o^+|.
\end{array}
\end{equation}
Together inequalities (\ref{EQ5}) and (\ref{EQ6}) imply the desired inequality.
\end{proof}

We are now in a position to present the following lower bounds.

\begin{theorem}\label{LB1}
Let $G$ be a  graph of order $n$, size $m$, maximum degree $\Delta$ and $\ell$ leaves.
Let $V_{o}=\{v‎\in V‎ \mid \deg(v) \mbox{ is  odd} \}$.
Then
\begin{itemize}
\item [(i)] $\gamma_{s}(G)\geq \dfrac{(\lceil\dfrac{\delta^{*}}{2}\rceil-
            \lfloor\dfrac{\Delta}{2}\rfloor+1)n+2‎\lfloor‎‎\dfrac{‎\Delta‎}{2}‎\rfloor \ell‎‎}
            {\lceil\dfrac{\delta^{*}}{2}\rceil+\lfloor\dfrac{\Delta}{2}\rfloor+1}$,

\item[(ii)] $\gamma_{s}(G)\geq \dfrac{(\lceil\dfrac{3\delta^{*}}{2}\rceil-
\lfloor\dfrac{3\Delta}{2}\rfloor+3)n+2(‎\lfloor‎\dfrac{‎\Delta‎}{2}‎‎\rfloor‎‎ \ell+|V_o|)}
{\lceil\dfrac{3\delta^{*}}{2}\rceil+\lfloor\dfrac{3\Delta}{2}\rfloor+3}$.
\end{itemize}
Furthermore these bounds are sharp.
\end{theorem}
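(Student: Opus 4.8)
The plan is to convert both bounds into upper bounds on $|V^-|$. Fix a signed dominating function $f$ of $G$ of minimum weight, so that $\gamma_s(G)=f(V)=|V^+|-|V^-|=n-2|V^-|$; it then suffices to bound $|V^-|$ from above and substitute. Throughout I write $a=\lceil\delta^*/2\rceil$ and $b=\lfloor\Delta/2\rfloor$ for brevity, and I will use the arithmetic identities $\lceil 3\delta^*/2\rceil=\delta^*+a$ and $\lfloor 3\Delta/2\rfloor=\Delta+b$, which one checks separately for even and odd arguments.

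Part (i) I would obtain directly from Lemma \ref{Lem.3.1}(i). Since every leaf lies in $V^+$ (as recorded in the proof of Lemma \ref{Lem.3.1}), we have $|V^+\setminus L|=|V^+|-\ell=n-|V^-|-\ell$. Chaining the two inequalities of Lemma \ref{Lem.3.1}(i) gives $(a+1)|V^-|\le b\,(n-|V^-|-\ell)$, hence $(a+b+1)|V^-|\le b(n-\ell)$ and $|V^-|\le b(n-\ell)/(a+b+1)$. Plugging this into $\gamma_s(G)=n-2|V^-|$ and simplifying produces $\gamma_s(G)\ge\big((a-b+1)n+2b\ell\big)/(a+b+1)$, which is exactly the stated expression. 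This part is essentially routine bookkeeping.

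Part (ii) is the substantive one, and the key idea is that neither half of Lemma \ref{Lem.3.1} suffices alone: one must combine them additively. From Lemma \ref{Lem.3.1}(i), as above, I record the inequality $(a+b+1)|V^-|\le b(n-\ell)$, call it (I). For Lemma \ref{Lem.3.1}(ii) I would first rewrite its right-hand side using the degree identity $2E^{+}-2E^{-}=\sum_{v\in V^{+}}\deg(v)-\sum_{v\in V^{-}}\deg(v)$, which holds because each edge inside $V^+$ (resp. $V^-$) is counted twice while each crossing edge cancels. Bounding $\sum_{v\in V^{+}}\deg(v)\le\Delta|V^+|=\Delta(n-|V^-|)$ crudely (no leaf refinement needed here) and, since every vertex of $V^-$ lies in $C(G)$, $\sum_{v\in V^{-}}\deg(v)\ge\delta^*|V^-|$, Lemma \ref{Lem.3.1}(ii) becomes $|V_o|+(\Delta+\delta^*+2)|V^-|\le\Delta n$, call it (II). Adding (I) and (II), the coefficient of $|V^-|$ becomes $(a+b+1)+(\Delta+\delta^*+2)=\lceil 3\delta^*/2\rceil+\lfloor 3\Delta/2\rfloor+3$ and the right-hand side becomes $(\Delta+b)n-b\ell-|V_o|=\lfloor 3\Delta/2\rfloor\,n-\lfloor\Delta/2\rfloor\ell-|V_o|$; solving for $|V^-|$ and substituting into $\gamma_s(G)=n-2|V^-|$ reproduces bound (ii).

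The main obstacle is structural rather than computational: one has to see that the leaf term $2\lfloor\Delta/2\rfloor\ell$ must enter only through part (i) while the $|V_o|$ term and the factor-of-three coefficients come from part (ii), and that the two inequalities add up with the floors and ceilings telescoping precisely via $\lceil 3\delta^*/2\rceil=\delta^*+\lceil\delta^*/2\rceil$ and $\lfloor 3\Delta/2\rfloor=\Delta+\lfloor\Delta/2\rfloor$. Keeping these floor/ceiling identities correct, and resisting the temptation to sharpen $\sum_{V^{+}}\deg$ with leaf information in (II), is where care is needed. Finally, for sharpness I would exhibit a family in which every invoked inequality holds with equality simultaneously, the natural candidates being regular graphs with no leaves (so that $\ell=0$ and $\delta^*=\delta=\Delta$), and verify that the weight of the extremal SDF equals the bound.
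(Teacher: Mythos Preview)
Your proof is correct and follows essentially the paper's approach: part~(i) is identical, and for part~(ii) the paper also combines Lemma~\ref{Lem.3.1}(i) and~(ii) with the crude bounds $\sum_{V^{+}}\deg\le\Delta|V^{+}|$ and $\sum_{V^{-}}\deg\ge\delta^{*}|V^{-}|$, arriving at the very same inequality on $|V^{-}|$---the only cosmetic difference is that the paper folds the two halves of Lemma~\ref{Lem.3.1}(i) into separate estimates for $2|E^{+}|$ and $2|E^{-}|$ before subtracting, whereas you first use the identity $2E^{+}-2E^{-}=\sum_{V^{+}}\deg-\sum_{V^{-}}\deg$ and then add your inequality~(I). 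For sharpness the paper simply cites $K_{n}$.
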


\begin{proof}
(i) This is a straightforward result by Part (i) of Lemma \ref{Lem.3.1},
$|V^{+}|=\dfrac{n+\gamma_{s}(G)}{2}$ and $|V^{-}|=\dfrac{n-\gamma_{s}(G)}{2}$.\vspace{3mm}

\noindent (ii) We have
\begin{equation}\label{EQ7}
\begin{array}{lcl}
2|E^+|&=& \sum_{v\in V^+}\deg_{G^+}(v)=\sum_{v\in V^+}\deg(v)-\sum_{v\in V^+}\deg_{G^-}(v)\\
&‎\leq‎ &‎\Delta‎|V^+|-|[V^{+},V^{-}]|‎\leq \Delta‎|V^+|-(‎\lceil‎‎\dfrac{‎\delta^*‎}{2}‎\rceil‎‎+1)|V^-|‎.
\end{array}
\end{equation}
and
\begin{equation}\label{EQ8}
\begin{array}{lcl}
2|E^-|&=& \sum_{v\in V^-}\deg_{G^-}(v)=\sum_{v\in V^-}\deg(v)-\sum_{v\in V^-}\deg_{G^+}(v)\\
&‎\geq‎‎ &‎\delta‎^*|V^-|-|[V^{+},V^{-}]|‎\geq ‎\delta^*|V^-|-‎\lfloor‎\dfrac{‎\Delta‎}{2}‎‎\rfloor(|V^+|-\ell)‎‎‎‎.
\end{array}
\end{equation}
Part (ii) of Lemma \ref{Lem.3.1} and Inequalities (\ref{EQ7}) and (\ref{EQ8}) imply the desired lower bound.

\noindent The bounds are sharp for the complete graph $K_n$.
\end{proof}
The lower bound given in Part (i) of Theorem \ref{LB1} was first found by Haas and
Wexler \cite{hw} for a graph $G$ with
$‎\delta(G)‎\geq 2‎‎$ using a longer proof.
The lower bound given in Part (i) of Theorem \ref{LB1} is an improvement of the lower bound
found in \cite{hw} when $\delta(G)=1$.

As an application of the concepts of limited packing and
tuple domination we give a sharp lower bound on $\gamma_{s}(G)$
in terms of the order of $G$, $‎\delta(G)‎$, $‎\Delta(G)‎$
and domination number $\gamma(G)‎‎$.

\begin{theorem}\label{LB2}
For any graph $G$ of order $n$, minimum degree $‎\delta‎$ and maximum degree $‎\Delta‎$,
$$\gamma_{s}(G)‎\geq -n+2\max \{‎\lceil‎‎\dfrac{‎\Delta‎+2}{2}‎\rceil,
‎\lceil‎\dfrac{‎\delta+2‎\gamma(G)‎‎}{2}‎‎\rceil‎‎‎‎‎\}$$
and this bound is sharp.
\end{theorem}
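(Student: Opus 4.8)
The plan is to restate the inequality in terms of the size of $V^{+}$ and then establish two independent lower bounds. Since $f$ is an SDF of minimum weight we have $|V^{+}|+|V^{-}|=n$ and $\gamma_{s}(G)=f(V)=|V^{+}|-|V^{-}|=2|V^{+}|-n$. Hence the assertion is equivalent to $|V^{+}|\geq\max\{\lceil\frac{\Delta+2}{2}\rceil,\lceil\frac{\delta+2\gamma(G)}{2}\rceil\}$, and it suffices to prove the two bounds $|V^{+}|\geq\lceil\frac{\Delta+2}{2}\rceil$ and $|V^{+}|\geq\lceil\frac{\delta+2\gamma(G)}{2}\rceil$ separately; the maximum then follows.

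For the first bound I would pick a vertex $v$ with $\deg(v)=\Delta$ and use the defining inequality $f(N[v])\geq1$. Writing $p=|N[v]\cap V^{+}|$ and $q=|N[v]\cap V^{-}|$ we have $p+q=\Delta+1$ and $p-q\geq1$, so $2p\geq\Delta+2$ and, $p$ being an integer, $p\geq\lceil\frac{\Delta+2}{2}\rceil$. Since $N[v]\cap V^{+}\subseteq V^{+}$ this already gives $|V^{+}|\geq\lceil\frac{\Delta+2}{2}\rceil$.

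The second bound is where the tuple-domination machinery enters. Repeating the local count at an \emph{arbitrary} vertex $v$ gives $|N[v]\cap V^{+}|\geq\lceil\frac{\deg(v)+2}{2}\rceil\geq\lceil\frac{\delta+2}{2}\rceil=:k$, so $V^{+}$ is a $k$-tuple dominating set and therefore $|V^{+}|\geq\gamma_{\times k}(G)$. I would then prove the key monotonicity step: if $D$ is a $j$-tuple dominating set with $j\geq2$, then deleting any vertex of $D$ yields a $(j-1)$-tuple dominating set, because removing one vertex decreases $|N[v]\cap D|$ by at most one for every $v$. This gives $\gamma_{\times j}(G)\geq\gamma_{\times(j-1)}(G)+1$, and iterating down to $\gamma_{\times1}(G)=\gamma(G)$ yields $\gamma_{\times k}(G)\geq\gamma(G)+(k-1)$. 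Finally I would observe the arithmetic identity $k-1=\lceil\frac{\delta+2}{2}\rceil-1=\lceil\frac{\delta}{2}\rceil$, so that $\gamma(G)+(k-1)=\gamma(G)+\lceil\frac{\delta}{2}\rceil=\lceil\frac{\delta+2\gamma(G)}{2}\rceil$, completing the second bound.

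I expect the monotonicity step for $\gamma_{\times k}$ to be the crux: it is conceptually simple, but it is precisely what converts the $k$-tuple domination number back into $\gamma(G)$ plus a $\delta$-term. I would be especially careful with the ceiling arithmetic when $\delta$ is odd, since there the naive estimate $\gamma(G)+\lfloor\frac{\delta}{2}\rfloor$ would fall one short; the identity $\lceil\frac{\delta+2}{2}\rceil-1=\lceil\frac{\delta}{2}\rceil$ is exactly what rescues the bound. Combining the two bounds gives $|V^{+}|\geq\max\{\lceil\frac{\Delta+2}{2}\rceil,\lceil\frac{\delta+2\gamma(G)}{2}\rceil\}$ and hence the stated lower bound on $\gamma_{s}(G)$. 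For sharpness I would test the complete graph $K_{n}$, where $\delta=\Delta=n-1$ and $\gamma(K_{n})=1$, so the right-hand side evaluates to $-n+2\lceil\frac{n+1}{2}\rceil$, i.e. $1$ for odd $n$ and $2$ for even $n$, matching the known values of $\gamma_{s}(K_{n})$.
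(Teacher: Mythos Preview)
Your argument is correct, and for the second bound it is essentially the paper's proof: both observe that $V^{+}$ is a $(\lceil\delta/2\rceil+1)$-tuple dominating set and then descend to $\gamma(G)$ via the monotonicity step $\gamma_{\times j}\geq\gamma_{\times(j-1)}+1$.

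The genuine difference is in the first bound. The paper argues dually to the second bound: it shows that $V^{-}$ is a $\lfloor\Delta/2\rfloor$-limited packing, so $|V^{-}|\leq L_{\lfloor\Delta/2\rfloor}(G)$, and then climbs the chain $L_{k}(G)\leq L_{k+1}(G)-1$ up to $L_{\Delta+1}(G)=n$ to obtain $|V^{-}|\leq n-\lceil\Delta/2\rceil-1$. Your route---pick a single vertex $v$ of maximum degree and read off $|N[v]\cap V^{+}|\geq\lceil(\Delta+2)/2\rceil$ directly from $f(N[v])\geq1$---is strictly more elementary and bypasses the limited-packing machinery entirely. What the paper's approach buys is structural symmetry (limited packing for $V^{-}$ mirroring tuple domination for $V^{+}$), which fits the paper's broader theme of linking these two parameters; what your approach buys is a one-line proof that makes the first inequality look almost trivial. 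Your sharpness check on $K_{n}$ matches the paper's.
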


\begin{proof}
We first prove the following claims.

\noindent {\bf ِClaim 1.} $\gamma_{s}(G)‎\geq -n+2‎\lceil‎‎\frac{‎\Delta‎+2}{2}‎\rceil‎‎$.

\noindent Let $f:V\rightarrow \{-1,1\}$ be a SDF of $G$ with
weight $f(V(G))=\gamma_{s}(G)$. Since $f(N[v])‎\geq1‎$, it follows that
$|N[v]‎\cap V^-‎|‎\leq ‎\lfloor‎\frac{‎\Delta‎}{2}‎‎\rfloor‎‎‎$ for every vertex $v\in V(G)$.
Therefore $V^-$ is a $‎\lfloor‎\frac{‎\Delta‎}{2}‎‎\rfloor‎‎‎$-limited packing set in $G$. Thus
\begin{equation}\label{EQ9}
‎(n-\gamma_{s}(G)‎)‎/2=|V^-|‎\leq L‎_{‎\lfloor‎\frac{‎\Delta‎}{2}‎‎‎\rfloor‎}(G).‎‎
\end{equation}
On the other hand, similar to the proof of Theorem \ref{UB1}, we have
$$L‎_{‎\lfloor‎\frac{‎\Delta‎}{2}‎‎‎\rfloor‎}(G)‎\leq L‎_{‎\lfloor‎\frac{‎\Delta‎}{2}‎‎‎\rfloor‎+1}(G)‎-1‎\leq \ldots\leq L‎_{‎\Delta‎+1}(G)‎‎‎-‎\lceil‎‎\frac{‎\Delta‎}{2}‎\rceil‎‎-1=n‎‎‎-‎\lceil‎‎\frac{‎\Delta‎}{2}‎\rceil‎‎-1.$$
Now Inequality (\ref{EQ9}) implies $2(‎\lceil‎\frac{‎\Delta‎}{2}‎‎\rceil+1‎‎)-n‎\leq \gamma_{s}(G)‎‎$, as desired.\vspace{3mm}

\noindent {\bf Claim 2.} $\gamma_{s}(G)‎\geq -n+2‎\lceil‎‎\dfrac{‎\delta+2‎\gamma(G)‎‎}{2}‎\rceil$.

\noindent Since $‎‎f(N[v])‎\geq1‎$, it follows that
$|N[v]‎\cap V^+‎|‎\geq ‎\lceil‎‎\dfrac{‎\delta‎}{2}‎‎‎\rceil‎‎‎+1$,
for every vertex $v\in V(G)$. Therefore $V‎^{+}‎$ is a
$(\lceil‎‎\frac{‎\delta‎}{2}‎‎‎\rceil‎‎‎+1)$-tuple dominating set in $G$. Thus
\begin{equation}\label{EQ10}
(n+\gamma_{s}(G))/2=|V^+|‎\geq ‎\gamma‎_{‎\times‎(‎\lceil‎‎\frac{‎\delta‎}{2}‎\rceil+1‎‎)}(G).‎‎‎
\end{equation}
Now let $D$ be a $(\lceil‎‎\frac{‎\delta‎}{2}‎‎‎\rceil‎‎‎+1)$-tuple dominating set in $G$.
Then $|N[v]‎\cap D‎|‎\geq \lceil‎‎\frac{‎\delta‎}{2}‎‎‎\rceil‎‎‎+1‎$, for every vertex $v‎\in V(G)‎$.
Let $u‎\in D‎$. It is easy to see that
$|N[v]‎\cap (D‎\setminus \{u\})‎‎|‎\geq \lceil‎‎\frac{‎\delta‎}{2}‎‎‎\rceil‎‎‎$,
for all $v\in V(G)$. Hence, $D‎\setminus \{u\}$ is a
$\lceil‎‎\frac{‎\delta‎}{2}‎‎‎\rceil‎‎‎$-tuple dominating set in $G$. Hence,
$‎\gamma‎_{‎\times‎(‎\lceil‎‎\frac{‎\delta‎}{2}‎\rceil+1‎‎)}(G)‎\geq ‎\gamma‎_{‎\times‎‎\lceil‎‎\frac{‎\delta‎}{2}‎\rceil‎‎}(G)‎+1$.
By repeating this process, we obtain
$$\gamma‎_{‎\times‎(‎\lceil‎‎\frac{‎\delta‎}{2}‎\rceil+1‎‎)}(G)‎\geq
‎\gamma‎_{‎\times‎‎\lceil‎‎\frac{‎\delta‎}{2}‎\rceil‎‎}(G)‎+1‎\geq \ldots‎\geq
‎\gamma‎_{‎\times‎1}(G)+\lceil‎‎\frac{‎\delta‎}{2}‎\rceil=‎\gamma(G)‎‎‎‎‎+\lceil‎‎\frac{‎\delta‎}{2}‎\rceil.$$
By Inequality (\ref{EQ10}),
$$(n+\gamma_{s}(G))/2‎\geq ‎\gamma(G)‎‎‎‎‎+\lceil‎‎\frac{‎\delta‎}{2}‎\rceil.$$
This completes the proof of Claim 2.

\noindent The result now follows by Claim 1 and Claim 2. For sharpness consider the complete graph $K‎_{n}‎$.
\end{proof}
We conclude this section by establishing a lower bound on the signed domination number of a tree.
Dunbar et al. \cite{dhhs} proved that for every tree of order $n‎\geq2‎$,
$$\gamma_{s}(T)‎‎\geq ‎\dfrac{n+4}{3}‎‎.$$
Moreover, they showed that this bound is sharp.\\
We now present a
lower bound on $\gamma_{s}(T)‎$ of a tree $T$ of order $n‎\geq 2‎$ and
show that this bound is tighter than $(n+4)/3$.

\begin{theorem}\label{LBforTrees}
Let $T$ be a tree of order $n‎\geq2‎$ with $\ell$ leaves and $s$ support vertices. Then
$$
\gamma_{s}(T)‎\geq ‎\frac{(2‎\lceil‎‎\frac{‎\delta‎‎^{*}‎}{2}‎\rceil-1‎‎)n+2(\ell-s+2)}{2‎\lceil‎‎\frac{‎\delta‎‎^{*}‎}{2}‎\rceil+1}‎
$$
and this bound is sharp.
\end{theorem}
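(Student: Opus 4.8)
The plan is to work with a fixed SDF $f$ of weight $\gamma_s(T)$ and, exactly as in the proof of Theorem \ref{LB1}, to translate everything into the two quantities $|V^+|=\tfrac{n+\gamma_s(T)}{2}$ and $|V^-|=\tfrac{n-\gamma_s(T)}{2}$. A direct manipulation shows that the claimed inequality is equivalent to the single estimate
\[
\Bigl(2\lceil\tfrac{\delta^*}{2}\rceil+1\Bigr)\,|V^-|\ \le\ n+s-\ell-2,
\]
so the whole problem reduces to an upper bound on the number of negative vertices. First I would record the facts special to trees: every leaf and every support vertex receives the value $+1$, hence $L\cup S\subseteq V^+$ and $V^-\subseteq C$; in particular each $v\in V^-$ satisfies $\deg(v)\ge\delta^*$ and has no neighbour among the leaves.

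Next I would produce the lower bound on the crossing edges. Since $f(N[v])\ge1$ for $v\in V^-$, part (i) of Lemma \ref{Lem.3.1} gives $\deg_{G^+}(v)\ge\lceil\delta^*/2\rceil+1$, and summing over $V^-$ yields $|[V^+,V^-]|\ge(\lceil\delta^*/2\rceil+1)|V^-|$. This is the easy half, and it already encodes the ``$+1$'' that separates signed from ordinary domination.

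The heart of the argument is a matching upper bound exploiting that $T$ is a tree. I would delete the $\ell$ leaves to obtain the tree $T'=T-L$ on $n-\ell$ vertices: its own leaves are support vertices of $T$, so there are at most $s$ of them, and every $v\in V^-$ keeps both its degree and its $\ge\lceil\delta^*/2\rceil+1$ positive neighbours, none of which were leaves. In a tree the crossing edges satisfy $|[V^+,V^-]|=c^++c^--1$, where $c^+$ and $c^-$ count the components of the forests induced on $V^+\setminus L$ and on $V^-$; feeding the minimum-degree information $\deg_{T'}(v)\ge\delta^*$ into this identity controls $c^-$ (each component of $G^-$ must emit many crossing edges), while the at-most-$s$ leaves of $T'$ control $c^+$. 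Combining this with the handshake identity on $T'$ upgrades the coefficient $\lceil\delta^*/2\rceil+1$ coming from Lemma \ref{Lem.3.1} to the required $2\lceil\delta^*/2\rceil+1$ and introduces the term $+2s$, that is, the $-s$ of the statement. Rearranging gives the displayed estimate, and reversing the reduction produces the claimed lower bound on $\gamma_s(T)$.

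The step I expect to be the real obstacle is precisely this last combination: making the two sides meet at the exact coefficient $2\lceil\delta^*/2\rceil+1$ forces one to track the parity of $\delta^*$ and of the degrees of the negative vertices, since the cases $\delta^*=2\lceil\delta^*/2\rceil$ and $\delta^*=2\lceil\delta^*/2\rceil-1$ behave slightly differently, and to argue that support vertices are the only possible ``cheap'' (degree-one) vertices of $T'$. The specialization $\delta^*=2$ recovers the form $\tfrac{n+4+2(\ell-s)}{3}$ announced in the abstract. Finally, for sharpness I would exhibit an explicit family: spiders whose legs have length two are the natural candidate, since there $V^-$ reduces to a single central vertex and all of the inequalities above hold with equality, so the bound is attained.
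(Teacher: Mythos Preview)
Your reduction to the single inequality $(2\lceil\delta^*/2\rceil+1)\,|V^-|\le n+s-\ell-2$ is correct, and the observations that $L\cup S\subseteq V^+$, that $V^-$ has no leaf neighbours, and that $T'=T-L$ is a tree whose leaves lie in $S$ are all fine. The gap is exactly where you flag it: the ``combining'' step. The identity $|[V^+,V^-]|=c^++c^--1$ together with the handshake count on $T'$ only gives back the edge count tautologically, and the degree hypothesis $\deg_{T'}(v)\ge\delta^*$ on $V^-$ alone yields $(\delta^*-2)|V^-|\le s-2$ rather than the needed $(2\lceil\delta^*/2\rceil+1)|V^-|\le n-\ell+s-2$; it does not see $\ell$ or produce the factor $2\lceil\delta^*/2\rceil+1$. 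Bounding $c^+$ by ``the at-most-$s$ leaves of $T'$'' is not justified either, since a component of $T'[V^+\setminus L]$ need not contain any leaf of $T'$. I could not complete your outline to reach the stated coefficient, and I do not believe the pieces you list suffice without an additional structural idea.

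The paper's argument supplies that idea in a different way. It roots $T$ at some $u\in V^-$ and, for each $v\in V^-$, lets $P_v$ be the set of $V^+$-descendants of $v$ reachable from $v$ through $V^+$; these sets partition $V^+$. The key two–level estimate is: each $v\in V^-\setminus\{u\}$ has at least $\lceil\delta^*/2\rceil$ \emph{children} in $V^+$ (one of its $\ge\lceil\delta^*/2\rceil+1$ positive neighbours may be its parent), and each such child $w$, having $v\in V^-$ in $N[w]$, must itself have a child in $V^+$. This yields $|P_v|\ge 2\lceil\delta^*/2\rceil$ (and $|P_u|\ge 2\lceil\delta^*/2\rceil+2$), with a correction $+|L\cap P_v|-|S\cap P_v|$ for the extra leaves hanging on support vertices. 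Summing gives $|V^+|\ge 2\lceil\delta^*/2\rceil\,|V^-|+2+(\ell-s)$, which is precisely your target inequality. The ``grandchild'' step is what doubles $\lceil\delta^*/2\rceil$ to $2\lceil\delta^*/2\rceil$ and is absent from your component/handshake plan. Your sharpness example (a spider with legs of length two) is fine for small leg count; note, however, that it does not give equality for every $t$, so ``all of the inequalities hold with equality'' is not true in general --- it suffices that it works for, say, $t=2$ or $t=3$.
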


\begin{proof}
Let $f:V\rightarrow \{-1,1\}$ be a SDF of $T$ with weight $f(V(T))=\gamma_{s}(T)$.
If $V‎^{-}‎=‎\emptyset‎$, then $\gamma_{s}(T)‎=n$ and the result follows.
Suppose that $V‎^{-}‎\neq \emptyset‎$, and $u‎\in V‎^{-}‎$.
Root the tree $T$ at vertex $u$. For each vertex $v‎\in V‎^{-}‎‎$, let $P‎_{v}‎$ denote the set
of vertices $w$ satisfying $(i)$ $w$ belongs to $V‎^{+}‎$, $(ii)$ $w$ is a descendent of $v$,
and $(iii)$ each vertex of the $v$-$w$ path of $T$, except $v$, is in $V‎^{+}‎$.
Then the sets $P‎_{v}‎$, $v‎\in V‎^{-}‎‎$, partition the set $V‎^{+}‎$.

Let $S$ be the set of support vertices. We define
$$W‎_{0}‎=\{v‎‎\in V‎^{-}‎\setminus‎\{u\}|P‎_{v}‎\cap S=‎\emptyset‎‎‎\}$$
and
$$W‎_{1}‎=\{v‎‎\in V‎^{-}‎\setminus‎\{u\}|P‎_{v}‎\cap S‎\neq‎‎\emptyset\}.$$

\noindent Since $f(N[u])‎\geq1‎$, there are at
least $‎\lceil‎\frac{‎\delta‎‎^{*}‎}{2}‎‎\rceil‎‎+1$ children of $u$ that belong to $V‎^{+}‎$.
Moreover, each child of $u$ has at least one
child in $V‎^{+}‎$, itself. Therefore
\begin{equation}\label{EQ9.1}
|P‎_{u}‎|‎\geq 2(\lceil‎\frac{‎\delta‎‎^{*}‎}{2}‎‎\rceil‎‎+1)-|S‎\cap P‎_{u}‎‎|‎+|L‎\cap P‎_{u}‎‎|,
\end{equation}
where $L$ is the set of leaves in $T$.

\noindent Clearly, $V‎^{-}‎\setminus‎\{u\}=W‎_{0}‎\cup W‎_{1}‎$.
Every vertex $v$ in $‎V‎^{-}‎\setminus‎\{u\}$ has at least $‎\lceil‎\frac{‎\delta‎‎^{*}‎}{2}‎‎\rceil$
children in $V‎^{+}‎$ and each child has at least one child in $V‎^{+}‎$, itself. Hence,
\begin{equation}\label{EQ10.1}
|P‎_{v}‎|‎\geq 2‎\lceil‎\frac{‎\delta‎‎^{*}‎}{2}‎‎\rceil.
\end{equation}
Now let $v‎\in W‎_{1}‎‎$. Note that each support vertex and all leaves adjacent to it
belong to only one $P‎_{v}‎$, necessarily. Also in this process we have counted
just one leaf for every support vertex. This implies that

\begin{equation}\label{EQ11}
\sum_{v\in W‎_{1}‎}|P‎_{v}‎|‎\geq2‎\lceil‎\frac{‎\delta‎‎^{*}‎}{2}‎‎\rceil‎‎|W‎_{1}|-|S‎\cap ‎\cup‎_{v‎\in W‎_{1}‎‎}P‎_{v}‎‎‎‎|+|L‎\cap ‎\cup‎_{v‎\in W‎_{1}‎‎}P‎_{v}‎‎‎‎|.‎
\end{equation}

\noindent Together inequalities (\ref{EQ9.1}), (\ref{EQ10.1}) and (\ref{EQ11}) lead to

$$
\begin{array}{lcl}
|V‎^{+}‎|&=&|P‎_{u}‎|+\sum_{v\in W‎_{0}‎}|P‎_{v}|+\sum_{v\in W‎_{1}‎}|P‎_{v}|\\
&‎\geq ‎&2(‎\lceil‎\dfrac{‎\delta‎‎^{*}‎}{2}‎‎\rceil+1‎‎)+2‎\lceil‎\dfrac{‎\delta‎‎^{*}‎}{2}‎‎\rceil |W‎_{0}|+2‎\lceil‎\dfrac{‎\delta‎‎^{*}‎}{2}‎‎\rceil|W‎_{1}|+\ell-s.
\end{array}
$$
Using $|V‎^{-}‎\setminus \{u\}‎‎|=|W‎_{0}‎|+|W‎_{1}‎|$ we deduce that
$$
|V‎^{+}‎|‎\geq2(‎\lceil‎\dfrac{‎\delta‎‎^{*}‎}{2}‎‎\rceil+1‎‎)+2‎\lceil‎\dfrac{‎\delta‎‎^{*}‎}{2}‎‎\rceil(|V‎^{-}‎|-1)+(\ell-s).
$$
Now by the facts that $|V^{+}|=\dfrac{n+\gamma_{s}(G)}{2}$ and $|V^{-}|=\dfrac{n-\gamma_{s}(G)}{2}$
we obtain the desired lower bound.
\end{proof}

Since
$$
\frac{(2‎\lceil‎‎\frac{‎\delta‎‎^{*}‎}{2}‎\rceil-1‎‎)n+2(\ell-s+2)}{2‎\lceil‎‎\frac{‎\delta‎‎^{*}‎}{2}‎\rceil+1}‎‎\geq
‎‎\frac{n+4+2(\ell-s)}{3}‎,‎
$$
we conclude the following lower bound as an immediate result.
\begin{corollary}
Let $T$ be a tree of order $n‎\geq2‎$, with $\ell$ leaves and $s$ support vertices.
Then $\gamma_{s}(T)‎‎\geq ‎\dfrac{n+4+2(\ell-s)}{3}‎‎$.
\end{corollary}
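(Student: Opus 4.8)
The plan is to obtain the Corollary directly from Theorem~\ref{LBforTrees}, so that the only real work is to compare the two lower bounds. Writing $a=\lceil\delta^{*}/2\rceil$ for brevity, Theorem~\ref{LBforTrees} already gives
\[
\gamma_{s}(T)\ \ge\ \frac{(2a-1)n+2(\ell-s+2)}{2a+1},
\]
so it suffices to show that this right-hand side is at least $\dfrac{n+4+2(\ell-s)}{3}$. Thus the entire statement reduces to a single inequality between two rational expressions in $n,\ \ell,\ s$ and $a$.

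To verify that inequality I would clear denominators. Since $2a+1>0$ and $3>0$, the claim is equivalent to
\[
3\big[(2a-1)n+2(\ell-s)+4\big]\ \ge\ (2a+1)\big[n+2(\ell-s)+4\big].
\]
Expanding both sides and collecting the coefficients of $n$, of $(\ell-s)$, and the constant term, everything collapses to the factorization
\[
3\big[(2a-1)n+2(\ell-s)+4\big]-(2a+1)\big[n+2(\ell-s)+4\big]=4(a-1)\big(n-(\ell-s)-2\big),
\]
so the Corollary is equivalent to $4(a-1)\big(n-(\ell-s)-2\big)\ge 0$.

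It then remains to check that both factors are nonnegative. The first factor is harmless: $a=\lceil\delta^{*}/2\rceil\ge 1$ because $\delta^{*}\ge\max\{2,\delta\}\ge 2$, as recorded just before Lemma~\ref{Lem.3.1}, whence $a-1\ge 0$. For the second factor I would prove the elementary tree inequality $\ell-s\le n-2$, i.e.\ $n-(\ell-s)-2\ge 0$: any tree of order $n\ge 2$ has a support vertex, so $s\ge 1$, and for $n\ge 3$ it has at most $n-1$ leaves, giving $\ell\le n-1$ and hence $\ell-s\le n-2$; the degenerate case $T\cong P_{2}$ has $\ell=s=2$ and equality. With both factors nonnegative, their product is nonnegative, which is exactly the required inequality.

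Once Theorem~\ref{LBforTrees} is granted there is no serious obstacle here: the content is the algebraic reduction to the factored form $4(a-1)(n-(\ell-s)-2)$ together with the combinatorial bound $\ell-s\le n-2$. For completeness I would also note that the boundary situation is consistent: if $V^{-}=\emptyset$ then $\gamma_{s}(T)=n$, and the Corollary then asks only for $n\ge\frac{n+4+2(\ell-s)}{3}$, which is once more the inequality $n\ge\ell-s+2$. Hence the single fact $\ell-s\le n-2$ settles every case.
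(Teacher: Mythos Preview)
Your proposal is correct and follows exactly the route the paper intends: the paper derives the Corollary from Theorem~\ref{LBforTrees} by simply asserting the inequality
\[
\frac{(2\lceil\delta^{*}/2\rceil-1)n+2(\ell-s+2)}{2\lceil\delta^{*}/2\rceil+1}\ \ge\ \frac{n+4+2(\ell-s)}{3}
\]
without justification, and you supply the verification the paper omits, including the clean factorization $4(a-1)(n-(\ell-s)-2)$ and the auxiliary fact $\ell-s\le n-2$. Your separate treatment of the case $V^{-}=\emptyset$ also covers the situation $C(T)=\emptyset$ where $\delta^{*}$ is undefined, so nothing is missing.
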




\begin{thebibliography}{9}
\bibitem {cs} W. Chen, E. Song,
{\ Lower bound on several versions of signed domination number},
Discrete Mathematics {\bf 308} (2008), 1897--1846.

\bibitem {dhhs} J. E. Dunbar, S.T. Hedetniemi, M. A. Henning,  P.J. Slater,
{\em Singed domination in graphs}, Graph Theory, Combinators and Applications, (John Wiley, Sons, 1995) 311--322.

\bibitem {f}  O. Favaran, {\em Singed domination in regular graphs},
Discrete Mathematics {\bf 158} (1996), 287--293.

\bibitem {gghr} R. Gallant, G. Gunther, B.L. Hartnell and D.F. Rall,
{\em Limited packing in graphs}, Discrete Applied Mathematics {\bf 158} (2010), 1357--1364.

\bibitem {hh} F. Harary and T.W. Haynes,
{\em Double domination in graphs},
Ars Combinatoria {\bf 55} (2000), 201--213.

\bibitem {hw} R. Haas, T. B. Wexler,
{\em Bounds on the signed domination number of a graph},
Electronic Notes in Discrete Mathematics {\bf 11} (2002), 742-750.

\bibitem {hs} M. A. Henning and P.J. Slater,
{\em Inequalities relating domination parameters in cubic graphs},
Discrete Mathematics {\bf 158} (1996), 87--98.

\bibitem {msh} D. A. Mojdeh, B. Samadi and S.M. Hosseini Moghaddam,
{\em Limited packing vs tuple domination in graphs}, Ars Combinatoria (to appear).

\bibitem{we} D. B. West, Introduction to graph theory (Second Edition), Prentice Hall, USA, 2001.

\end{thebibliography}
\end{document}